\documentclass[10pt]{amsart}
\usepackage{enumerate,url,amssymb,  mathrsfs, nicefrac, pifont, stmaryrd, graphicx, pdfsync}
\newtheorem{theorem}{Theorem}[section]

\newtheorem*{lemma*}{Lemma}

\theoremstyle{definition}

\theoremstyle{remark}
\newtheorem{remark}[theorem]{Remark}

\numberwithin{equation}{section}


\newcommand{\onto}{\xrightarrow[]{{}_{\!\!\textnormal{onto}\!\!}}}

\newcommand{\deff}{\stackrel {\tiny{\textnormal{def}}}{=\!\!=} }

\def\Xint#1{\mathchoice
{\XXint\displaystyle\textstyle{#1}}%
{\XXint\textstyle\scriptstyle{#1}}%
{\XXint\scriptstyle\scriptscriptstyle{#1}}%
{\XXint\scriptscriptstyle\scriptscriptstyle{#1}}%
\!\int}
\def\XXint#1#2#3{{\setbox0=\hbox{$#1{#2#3}{\int}$}\vcenter{\hbox{$#2#3$}}\kern-.5\wd0}}
\def\dashint{\Xint-}

\def\XXiint#1#2#3{{\setbox0=\hbox{$#1{#2#3}{\iint}$}\vcenter{\hbox{$#2#3$}}\kern-.5\wd0}}

\begin{document}

\title[The Beauty of Harmonic Analysis]  {Let the Beauty of Harmonic Analysis  \\ be Revealed Through  Nonlinear PDE{\footnotesize{s}} \\$\;$ \\ $\textnormal{\textit{{\large A Work of Art in Three  Sketches}}}$}

\author[Iwaniec]{Tadeusz \;\;Iwaniec  }
\address{Department of Mathematics, Syracuse University, Syracuse,
NY 13244, USA and Department of Mathematics and Statistics,
University of Helsinki, Finland}
\email{tiwaniec@syr.edu}

\thanks{ T. Iwaniec was supported by the NSF grant DMS-0800416 and the Academy of Finland project 1128331.}

\subjclass[2000]{Primary 42B37; Secondary 35J60, 42B25}


\keywords{Maximal inequalities, Nonlinear PDEs, Jacobians, $\,p$ -harmonic equation, nonlinear commutators of singular integrals}

\maketitle




\section*{Prologue}
Mathematicians, like me, have the privilege to enjoy the ingenious ideas and splendid theories imagined and brilliantly developed by previous mathematicians. Their beauty inspires us to ask questions to create our own little theory with grace and prospective applications.  And there is never an end to new questions, which in effect is the key to advances in mathematics.  But advances come after hours and hours of intense work, trapping and holding our attention for years.  This can be a dream, sometimes immense pleasure, sometimes a breathtaking moment when we spot the underlying ideas  that are actually relevant to our aspirations. Genuine mathematics does not abide in complexity but, contrary to what one might think, somewhere in the unlimited beauty of applications of sophisticated ideas. Paraphrasing Luciano Pavarotti on music, let me say:
\begin{quote}
\textit{``Learning mathematics  by only reading about it  is like making love by e-mail.''}
\end{quote}

From the very beginning of my mathematical life I fell in love with logic and later as a young scholar with geometry and harmonic analysis. There are so many captivating topics in geometric analysis. I was especially fascinated by  the foundation of Geometric Function Theory (GFT, quasiconformal mappings), the mysteries in the Calculus of Variations (nonconvex energy integrals) and  Nonlinear Partial Differential Equations (PDEs, elliptic type). Nowadays,  these fields are essential in  material science and nonlinear elasticity, which are critical in modern technology and many engineering problems. Myriad practical problems of nonlinear elasticity and numerous elegant conjectures are very appealing to me.  But I cannot fully treat these topics here. I will only indicate briefly a few adventurous moments of my studies on these topics by means of applications of
maximal functions due to \textit{Hardy}\, and  \textit{Littlewood}, \textit{Fefferman} and \textit{Stein}, as well as nonlinear commutators which originated with \textit{Coifman}, \textit{Rochberg} and \textit{Weiss}. The results presented here are not my best, though there is some element of aesthetic beauty in them. It is for these reasons that:
\begin{center}
 \textit{\textbf{Fefferman} and \textbf{Stein} (the architects of maximal inequalities),\\  \textit{\textbf{Coifman}}, \textit{\textbf{Rochberg}} and \textit{\textbf{Weiss}} (the founders of singular commutators)\\ became  my mathematical luminaries. }
\end{center}

  Some further results (on quasiconformal mappings in even dimensions) are discussed in  this issue by my amazingly imaginative colleague \textit{Gaven Martin}\footnote{Gaven speaks of me as "After all, he has quite a good memory even if it is a bit short" Yes indeed, I have a good memory for masterpieces, but a short one for trivia.}. We have presented GFT in all dimensions in our book \cite{IMb}.  Well, we did not make a fortune with this book, nor did we become famous. But I have heard someone say, "Hey, I have read your book".  How  satisfying!\\

As I share the beauty and joy of mathematics with you I also remember Polish mathematicians whose glorious scientific careers came to a cruel end during Nazi-Soviet occupation. \textit{J\'{o}zef Marcinkiewicz}, \textit{Stanis{\l}aw Saks} and \textit{Juliusz Pawe{\l} Schauder} were inspirations to me. I am mindful of them not only as mathematicians  \cite{I}. Marcinkiewicz, along with  22 thousand Polish patriots who dared to exhibit a love and pride of an independent Poland, were executed by the order of J. Stalin, and buried secretly in mass graves in gloomy forested sites near Starobielsk, Ostashkovo and the most documented Katy\'{n}.

\begin{quote}
 \textit{KATY\'{N}  CAROL}

\textit{Someday maybe a great musician will rise up,\\
will transform speechless rows of gravestones into a keyboard,\\
a great Polish song writer will compose a frightening ballad with blood and tears.\\
$[...]$\\
And there will emerge untold stories,\\
strange hearts, bodies bathed in light...\\
And the Truth again will embody\\
The Spirit\\
with living words-of the sand of Katy\'{n}} \\

\end{quote}
\begin{flushright} - \textit{Kazimiera I{\l}{\l}akowicz\'{o}wna} \\
(translated by the author of this article)
\end{flushright}
 Antoni Zygmund remarked once about Marcinkiewicz:
\begin{quote}
``...his early death may be seen as a great blow to Polish Mathematics, and probably its heaviest individual loss during the Second World War.''\\
\end{quote}

I have had the privilege of growing up in the environment these mathematicians left for us.

\section{The Natural Domain of Definition}
While singular integrals are naturally defined in $\,\mathscr L^2(\mathbb R^n) $, there is also such a thing as the natural space in which we look for the solutions of a differential equation; just to mention  a few of those readily seen as being natural:
\begin{itemize}
\item The Sobolev space $\,\mathscr W^{1,2}(\Omega)\,$ for the  Laplacian.
\item
The Sobolev space $\,\mathscr W^{1,p}(\Omega)\,$ for the $\,p$-harmonic operator \\$\, \textnormal{div}\, \big(|\nabla |^{p-2} \nabla  \big) = 0 \,$,
\item
The  space  $\,\mathscr W^{1,n}(\mathbb X ,\,\mathbb Y)\,$ for quasiconformal mappings $\, f\,:\,\mathbb X \onto \mathbb Y\,$ between $\,n$-manifolds, in which the major player is the Jacobian determinant $\, J(x,f)\,\textnormal{d}x = f^\sharp(\textnormal{d}y)\,$  -pullback of the volume form in $\,\mathbb Y .$
\end{itemize}
There is no genuine distinction between linear and nonlinear differential operators. Indeed,  once we depart from their natural domain of definition the application of singular integrals in the extended settings such as:
 $\,\mathscr L^p(\mathbb R^n) \rightarrow   \mathscr L^p(\mathbb R^n)\,$, \; $\,\mathscr L^1(\mathbb R^n) \rightarrow   \mathscr L^1_{\textnormal{weak}}(\mathbb R^n)\,$, \; $\,\mathscr H^1(\mathbb R^n) \rightarrow   \mathscr L^1(\mathbb R^n)\,$
and $\,\mathscr L^\infty(\mathbb R^n) \rightarrow   BMO(\mathbb R^n)\,$, becomes equally pressing in both cases.  But first we need some definitions.

 \section{Maximal Operators}
 Maximal inequalities, traditionally discussed in the entire space $\mathbb R^n\,$,  can actually be considered for functions  $\,f \in \mathscr L^1_{\textnormal{loc}} (\Omega)\,$ on any open set $\,\Omega \subset\mathbb R^n\,$:
 \begin{itemize}
 \item \textit{Hardy-Littlewood maximal function} \cite{HL} (1930)\,,
 $$\,\mathbf M f(x) = \sup \,\Big\{\;\dashint _B | f(y)|\,\textnormal{d}y\,;\;\;B = B(x, r) \subset \Omega \;\Big\}\, , \qquad \dashint_B = \frac{1}{|B|} \int_B $$

 \item \textit{Fefferman's sharp operator} \cite{F} \,(1971)\;,
   $$\,\;\;\;\;\;\;\mathbf M^\sharp f(x) = \sup_B \,\Big\{\;\dashint _B \big| f(y) \,- \, f_B\,\big|\,\textnormal{d}y\,;\;\;B = B(x, r) \subset \Omega \;\Big\} $$
 \item \textit{Spherical operator of E. Stein }\; \cite {St1}\;,
  $$\,\mathbf S f(x) = \sup_{\partial B} \,\Big\{\;\dashint _{\partial B} | f(y)|\,\textnormal{d}y\,;\;\;\;B = B(x, r) \subset \Omega \;\Big\}$$

  \begin{theorem}[Three Fundamental Maximal Inequalities]\label{maxIneq}$\;$\\

     For every  $\,f \in \mathscr L^ q(\mathbb R^n)\,$ with $\, 1 < q < \infty\,$, we have  \,\footnote{\,Hereafter the notation $\,\preccurlyeq\,$ refers to inequalities with the so-called \textit{implied constants} in the right hand side, which vary from line to line. Their precise values are readily perceived from the context. We shall indulge in this harmless convention for aesthetic reasons.}
  \begin{equation}\label{1}
  \|\, f\,\|_{\mathscr L^q(\mathbb R^n)} \preccurlyeq \; (q-1)\cdot\|\, \mathbf M f\,\|_{\mathscr L^q(\mathbb R^n)} \preccurlyeq \|\, f\,\|_{\mathscr L^q(\mathbb R^n)}\,.
  \end{equation}\\
   If $\,f \in \mathscr L^1(\mathbb R^n) \cap \mathscr L^q(\mathbb R^n)\,$ and $\, 1 < q < \infty\,$, then
  \begin{equation}\label{2}
  \|\, \mathbf M f\,\|_{\mathscr L^q(\mathbb R^n)} \preccurlyeq        \|\, \mathbf M^\sharp  f\,\|_{\mathscr L^q(\mathbb R^n)} \preccurlyeq  \|\, \mathbf M f\,\|_{\mathscr L^q(\mathbb R^n)}
  \end{equation}
     In both inequalities (\ref{1}) and  (\ref{2})  the implied constants stay bounded as $\, q \,$ approaches 1.\\

   \;If  $\,f \in \mathscr L^s(\mathbb R^n)\,$ with  $\, s > \frac{n}{n-1}\,$, then
  \begin{equation}\label{3}
  \|\, \mathbf S f\,\|_{\mathscr L^s(\mathbb R^n)} \preccurlyeq        \|\,  f\,\|_{\mathscr L^s(\mathbb R^n)}
  \end{equation}
  This time the implied constant blows up as $\, s\, $ approaches $\, \frac{n}{n-1}\,$.

  \end{theorem}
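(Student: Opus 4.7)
The plan is to treat the three inequalities by separate techniques, progressing from the classical Hardy--Littlewood bound up to Stein's spherical maximal theorem.

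For (\ref{1}), the pointwise bound $|f(x)|\le \mathbf M f(x)$ holds almost everywhere by the Lebesgue differentiation theorem and gives $\|f\|_{\mathscr L^q}\le\|\mathbf M f\|_{\mathscr L^q}$ at once. For the upper bound on $\|\mathbf M f\|_{\mathscr L^q}$ I would first prove the weak-type $(1,1)$ inequality $|\{\mathbf M f>\lambda\}|\le C_n\lambda^{-1}\|f\|_{\mathscr L^1}$ via a Vitali-type covering of the level set, then Marcinkiewicz-interpolate against the trivial $\mathscr L^\infty$-bound $\|\mathbf M f\|_\infty\le\|f\|_\infty$. The sharp constant $C_n\,q/(q-1)$ produced by the interpolation blows up like $(q-1)^{-1}$ as $q\to 1^+$, so multiplying through by $(q-1)$ leaves a constant bounded in that limit, which is the right-hand inequality in (\ref{1}).

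For (\ref{2}), the pointwise estimate $\mathbf M^\sharp f\le 2\,\mathbf M f$ (from $|f_B|\le \mathbf M f(x)$ whenever $x\in B$) gives the right-hand inequality. The Fefferman--Stein converse I would obtain from the good-$\lambda$ distributional inequality
\[
\bigl|\{\mathbf M f>2\lambda,\ \mathbf M^\sharp f\le\gamma\lambda\}\bigr|\le C_n\gamma\,\bigl|\{\mathbf M f>\lambda\}\bigr|,
\]
established by a Calder\'on--Zygmund decomposition on the open level set $\{\mathbf M f>\lambda\}$: on each Whitney cube the mean of $f$ is controlled by a multiple of $\lambda$, and the oscillation around that mean is precisely what $\mathbf M^\sharp f$ sees. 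The assumption $f\in\mathscr L^1\cap\mathscr L^q$ secures both the finiteness of the level sets and the a priori finiteness of $\|\mathbf M f\|_{\mathscr L^q}$. Multiplying the good-$\lambda$ inequality by $q\lambda^{q-1}$, integrating, and choosing $\gamma$ small enough to absorb the bad term on the left yields a dimensional constant that remains bounded as $q\to 1^+$.

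The genuinely hard case is (\ref{3}), Stein's spherical maximal theorem. I would write $\mathbf S f(x)=\sup_{r>0}|(f*\sigma_r)(x)|$ where $\sigma_r$ is the normalized surface measure on $\partial B(0,r)$, and exploit the stationary-phase decay $|\widehat{\sigma_1}(\xi)|\preccurlyeq(1+|\xi|)^{-(n-1)/2}$. A Littlewood--Paley decomposition in frequency splits the operator into dyadic pieces, each of which I would bound on $\mathscr L^2$ by Plancherel combined with a $g$-function argument that converts the pointwise supremum over $r$ into an $\mathscr L^2$-norm on the multiplicative scale group, and on $\mathscr L^\infty$ by the total-mass estimate $\|\sigma_r\|=1$. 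Real interpolation then yields $\mathscr L^s$ bounds with geometric factors whose sum over dyadic scales converges if and only if $s>n/(n-1)$. The main obstacle, and the source of the endpoint restriction, lies precisely at this critical exponent: the $(n-1)/2$ Fourier decay of the spherical measure exactly balances the half-derivative lost to the $g$-function at $s=n/(n-1)$, so the geometric series diverges there, and tracking its partial sums as $s\downarrow n/(n-1)$ recovers the claimed blow-up of the implied constant.
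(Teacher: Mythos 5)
The paper itself offers no proof of this theorem --- it is quoted as classical background with references to Hardy--Littlewood, Fefferman(--Stein) and Stein/Bourgain --- so your sketch can only be judged on its own merits, and there it has a genuine gap. The left-hand inequality in (\ref{1}) is \emph{not} the trivial bound $\|f\|_{\mathscr L^q}\le\|\mathbf M f\|_{\mathscr L^q}$ that you derive from $|f|\le \mathbf M f$ a.e.; with the factor $(q-1)$ present and the implied constant required to stay bounded as $q\to 1$, it asserts the lower bound $\|\mathbf M f\|_{\mathscr L^q(\mathbb R^n)}\ge \frac{c_n}{q-1}\,\|f\|_{\mathscr L^q(\mathbb R^n)}$, i.e.\ that the blow-up of order $(q-1)^{-1}$ is attained for \emph{every} $f$, not just for the operator norm. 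Your pointwise argument yields the inequality only with implied constant of size $(q-1)^{-1}$, which contradicts the stated boundedness of the constants near $q=1$. The standard repair is the reverse weak-type $(1,1)$ inequality $|\{\mathbf M f>\lambda\}|\ge \frac{c_n}{\lambda}\int_{\{|f|>\lambda\}}|f|$ (proved, e.g., by a Calder\'on--Zygmund decomposition at height $\lambda$: each stopping cube lies in $\{\mathbf M f>c_n\lambda\}$), which upon integration in $\lambda$ gives $\|\mathbf M f\|_q^q\ge \frac{c_n q}{q-1}\|f\|_q^q$. This sharpened form is not a pedantic point: it is exactly what the author exploits in Sketch~II, where the small factor $(s-p)$ that gets absorbed comes from this lower bound applied with $q=s/p\approx 1$. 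Your treatment of (\ref{2}) (pointwise bound $\mathbf M^\sharp f\le 2\mathbf M f$ plus the good-$\lambda$ inequality, with $f\in\mathscr L^1\cap\mathscr L^q$ guaranteeing the a priori finiteness) is sound and does give constants bounded as $q\to1$.

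For (\ref{3}) your outline also has a flaw in the interpolation step. Bounding each Littlewood--Paley piece on $\mathscr L^2$ (with gain $2^{-k(n-2)/2}$ from the $(n-1)/2$ Fourier decay and the $g$-function/Sobolev-in-$r$ trick) and on $\mathscr L^\infty$ (total mass) and interpolating only reaches exponents $s\ge 2$; since $\frac{n}{n-1}<2$ for $n\ge3$, the claimed range $s>\frac{n}{n-1}$ cannot come out of these two endpoints, and the ``sum converges iff $s>n/(n-1)$'' assertion does not follow. One must instead interpolate the $\mathscr L^2$ gain against an estimate near $\mathscr L^1$ --- for instance, the $k$-th piece has a radially decreasing integrable majorant of mass $\approx 2^k$, so its maximal function is pointwise dominated by $2^k\,\mathbf M f$, giving an $\mathscr L^p$ (or weak $(1,1)$) bound of size $2^k$; balancing $2^{-k(n-2)/2}$ against $2^{k}$ yields precisely $s>\frac{n}{n-1}$ for $n\ge 3$, with the blow-up of the constant as $s\downarrow\frac{n}{n-1}$ coming from the divergence of the geometric series. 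Finally, note that this scheme gives nothing when $n=2$ (the $\mathscr L^2$ gain degenerates), and the two-dimensional case of (\ref{3}) requires Bourgain's separate and much deeper argument, which your proposal does not address.
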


\end{itemize}
  Our discussion of the Hardy space $\,\mathscr H^1(\Omega)\,$ becomes somewhat simpler if we confine ourselves to a rotationally invariant approximation to the identity.   Thus we choose and fix a function $\,\Phi \in \mathscr C^\infty_\circ[0, 1 )\,$  such that $\,\int_{\mathbb R^n} \Phi (|x|) \,\textnormal{d}x \,= 1\,$, and set  $\, \Phi_t(x) =  t^{-n} \Phi\big(\frac{|x|}{t}\big)\,$.
 Given any $\,F\in \mathscr L^1_{\textnormal{loc}}(\Omega)\,$, we smooth it as
 $$
  \big(F \ast \Phi_t\big) (x) =  \int_\Omega \Phi_t(x-y) \,F(y)\,\textnormal{d}y\,\,,\;\;\;\;\;0 < t < \textnormal{dist} (x, \partial \Omega)
  $$
  A maximal operator that accounts for cancellations of positive and negative terms is now given by

\begin{itemize}
\item  $ \mathbf{\mathcal M}F (x) = \sup\big\{\;\big|F\ast \Phi_t \big|(x)\; : \;0 < t < \textnormal{dist} (x, \partial \Omega)\, \big\}
  $
 \item
\textit{The Hardy Space $\,\mathscr H^1(\Omega)\,$} consists of functions $\,F \in \mathscr L^1(\Omega)\,$ such that
 $$
 \big{\|} F \big{\|}_{\mathscr H^1(\Omega)}\;\deff  \; \big{\|} \mathbf{\mathcal M}F \big{\|}_{\mathscr L^1(\Omega)}\;< \; \infty
 $$
 \end{itemize}

 The above maximal operators, brilliantly developed by C. Fefferman and E. Stein, not only gave birth to a new discipline to effectively handle  singular integrals: they also provided Geometric Analysts, like me, with the means of solving demanding problems in Geometric Function Theory (GFT) and nonlinear PDEs. Maximal inequalities saved us from laborious computation once used in a clever, sometimes artistic, way. Let us take on stage, as  the first sketch, the Jacobian determinant  $\,F = J(x,f) = \textnormal{det}\, Df(x)\,$ of the differential matrix $\,Df(x) \in \mathbb R^{n\times n}\,$ of a  mapping $\,f : \Omega \rightarrow \mathbb R^n\,$ \cite{IO}.

 \section*{Sketch I,\;\; Let Hardy\,$\,\&\,$\,Littlewood meet Fefferman\,$\,\&\,$\,Stein,  \\ $\,J(x,f) \in \mathscr H^1(\Omega)\,$ }
  It is quite easily seen that for the  rotationally invariant approximation of unity, we have
 \begin{equation}\label{convolution}
 \big|F\ast \Phi_t \big|(x) \leqslant  \frac{C_\Phi}{t^{n+1}} \int_0^t \Big| \int_{B(x, r)} F(y)\,\textnormal{d}y \Big|\,\textnormal{d}r\;,\;\;\;\;\;\;\;C_\Phi = \big{\|} \Phi' \big{\|}_{\mathscr L^\infty[0, 1)}
 \end{equation}
 It should be noted that the absolute value is administered only upon integrating $\,F\,$  over the ball $\, B(x, r)\,$. Such an observation, though elementary,  is vital when dealing with  \textit{null-Lagrangians}. Null-Lagrangians, like Jacobians,  are  the  nonlinear differential expressions whose integral mean over any subdomain reduces to the boundary integral, basically due to cancellation of second order partial derivatives when integrating by parts.

 \begin{theorem} \label{Jacobian in Hardy space} \cite{IO} Suppose  $\,f \in \mathscr W^{1, n-1}_{\textnormal{loc}}(\Omega ,\, \mathbb R^n )\,$ and  the matrix of cofactors $\,D^\sharp f \in \mathscr L^{\frac{n}{n-1}}(\Omega, \mathbb R^{n\times n})\,$. Then the Jacobian determinant of $\,f\,$ lies in the Hardy space $\,\mathscr H^1(\Omega)\,$. Furthermore,
 \begin{equation}\label{sharp}
 \big{\|}\, \textnormal{det}\,Df \,\big{\|}_{\mathscr H^1(\Omega)}\;\preccurlyeq\; \int_\Omega |\,D^\sharp f(x)\, |^{\frac{n}{n-1}}\,\textnormal{d}x
 \end{equation}
 \end{theorem}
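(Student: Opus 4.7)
The plan is to exploit the null-Lagrangian structure of $\det Df$ through the convolution bound (\ref{convolution}). The key algebraic input is the Piola identity, according to which each row of the cofactor matrix $D^\sharp f$ is divergence-free; combined with cofactor expansion this rewrites the Jacobian as a divergence, $n\det Df = \textnormal{div}(D^\sharp f\cdot f)$. For $f\in \mathscr W^{1,n-1}_{\textnormal{loc}}$ with $D^\sharp f\in\mathscr L^{n/(n-1)}$ this identity is to be read distributionally and justified by smooth approximation. Applying the divergence theorem on $B(x,r)$, and using that the rows of $D^\sharp f$ are divergence-free (so any constant vector $c\in\mathbb R^n$ may be freely subtracted from $f$), one gets
\begin{equation*}
\int_{B(x,r)} \det Df(y)\,\textnormal{d}y \;=\; \frac{1}{n}\int_{\partial B(x,r)} \big\langle f(y)-c,\; D^\sharp f(y)\cdot \nu\big\rangle\,\textnormal{d}\sigma(y).
\end{equation*}

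Next I would feed this identity into the convolution bound (\ref{convolution}). Choosing $c=f_{B(x,t)}$ and invoking the slicing identity $\int_0^t\int_{\partial B_r}\cdot\,\textnormal{d}\sigma\,\textnormal{d}r=\int_{B_t}\cdot\,\textnormal{d}y$ collapses the double integral on the right of (\ref{convolution}) into a single volume integral,
\begin{equation*}
\big|\det Df \ast \Phi_t\big|(x) \;\preccurlyeq\; \frac{1}{t^{n+1}}\int_{B(x,t)} \big|f-f_{B(x,t)}\big|\cdot |D^\sharp f|\,\textnormal{d}y.
\end{equation*}
H\"older's inequality with exponents $n$ and $n/(n-1)$, followed by the Sobolev--Poincar\'e embedding $\mathscr W^{1,n-1}\hookrightarrow\mathscr L^n$ (available since $(n-1)^\ast=n(n-1)\ge n$ for $n\ge 2$), then yields a pointwise control of $\mathcal M(\det Df)(x)$ by products of Hardy--Littlewood maximal functions of $|Df|^{n-1}$ and $|D^\sharp f|^{n/(n-1)}$ centred at $x$.

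The last step is to integrate this pointwise inequality over $\Omega$ and to appeal to the Hardy--Littlewood maximal inequality (\ref{1}). This is exactly where the genuine difficulty lies: a naive H\"older pairing of $|Df|^{n-1}$ against $|D^\sharp f|^{n/(n-1)}$ closes only when $Df\in\mathscr L^n$, a condition strictly stronger than the hypothesis. The heart of the proof is therefore to refine the previous pointwise bound so that only the cofactor norm $\int_\Omega |D^\sharp f|^{n/(n-1)}$ survives on the right-hand side. I would try to bypass the difficulty by staying on spheres from the outset and invoking Stein's spherical maximal operator (\ref{3}), together with cofactor expansions along distinct rows that pair each scalar component $f_i$ exclusively with the $i$th divergence-free row of $D^\sharp f$; the resulting sphere-based Poincar\'e differences then couple directly to $|D^\sharp f|^{n/(n-1)}$ alone. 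The role of the hypothesis $f\in \mathscr W^{1,n-1}_{\textnormal{loc}}$ is in the end only to give a distributional meaning to the null-Lagrangian identity --- the quantitative estimate is carried entirely by the cofactor norm.
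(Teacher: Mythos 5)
Your set-up runs parallel to the paper's: the Piola/divergence identity on balls is the same null-Lagrangian input that the paper packages as the isoperimetric-type inequality (\ref{isoperimetric}), and you correctly diagnose that the naive H\"older--Poincar\'e pairing of $|Df|^{n-1}$ with $|D^\sharp f|^{n/(n-1)}$ only closes when $Df\in\mathscr L^n$, which is strictly stronger than the hypothesis. The genuine gap is in your proposed escape route. You invoke Stein's spherical maximal theorem (\ref{3}), but that estimate is available only for exponents $s>\frac{n}{n-1}$, with the implied constant blowing up at the endpoint, whereas the hypothesis places $|D^\sharp f|$ exactly in $\mathscr L^{n/(n-1)}(\Omega)$. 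Staying on spheres, however cleverly you distribute the cofactor rows, produces at best the pointwise bound $\mathbf{\mathcal M}(\det Df)\preccurlyeq\big[\mathbf S(|D^\sharp f|)\big]^{n/(n-1)}$, and you are then stuck precisely at the borderline the paper warns about: Hardy--Littlewood applied to $|D^\sharp f|^{n/(n-1)}\in\mathscr L^1$ gives only a weak-type conclusion, and (\ref{3}) does not apply at $s=\frac{n}{n-1}$. The obstruction sits in the maximal-function step, not in the algebra of cofactor expansions, so row-by-row bookkeeping cannot remove it.

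The missing idea is that the convolution bound (\ref{convolution}) never requires a supremum over the radii $r$: it only involves an average in $r\in(0,t)$. This permits replacing $\mathbf S$ by the interpolating family
\[
\mathfrak M_s F(x)\;=\;\sup_{0<t<\dist(x,\partial\Omega)}\;\Big[\,\frac{n}{t^n}\int_0^t r^{n-1}\Big(\dashint_{S(x,r)}|F(y)|\,\textnormal{d}y\Big)^{s}\,\textnormal{d}r\,\Big]^{\frac{1}{s}},
\]
with $\mathfrak M_1=\mathbf M$ and $\mathfrak M_\infty=\mathbf S$, which is bounded on $\mathscr L^p$ for every $p>\frac{n}{n-1+\frac{1}{s}}$ --- in particular on $\mathscr L^{n/(n-1)}$ for the finite value $s=\frac{n}{n-1}$. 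Combining (\ref{convolution}) with (\ref{isoperimetric}) gives $\mathbf{\mathcal M}(\det Df)\preccurlyeq\big(\mathfrak M_s|D^\sharp f|\big)^{n/(n-1)}$ with this $s$, and integration yields (\ref{sharp}) with only the cofactor norm on the right. Until you supply this endpoint device (or an equivalent substitute for the spherical maximal theorem at $s=\frac{n}{n-1}$), your final step fails.
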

 \begin{proof} We sketch the proof  with emphasis  on  the spherical maximal function that comes into play.
 Let us commence with an isoperimetric type inequality

 \begin{equation}\label{isoperimetric}
 \Big|\; \dashint_{B(x,r)} F(y)\; \textnormal{d}y \;\Big|\;\preccurlyeq\; \Big(\;\dashint_{S(x,r)} \big|D^\sharp f (y)  \big| \textnormal{d}y \;\Big )^{\frac{n}{n-1}}\;,\;\;\;\;F(y)\,=\,J(y, f)
 \end{equation}
  Thus, in particular, $\, |\,F\,| \leqslant \,|D^\sharp f\,|^{\frac{n}{n-1}} \in \mathscr L^1(\Omega)\,$.
 A skillful reader with patience  can find it in Federer's  Book, see also \cite{MQY} for a legible proof.

  Trivially, we have a pointwise inequality $\,\mathbf{M}F \leqslant \mathbf{M}\big(|D^\sharp f|^{\frac{n}{n-1}} \big)\,$, but it does not yield the desired $\,\mathscr L^1\,$-integrability of $\,\mathbf{M}F \,$.  Combining  (\ref{convolution}) and (\ref{isoperimetric}) yields a better  inequality  $\,\mathbf{\mathcal M}F \preccurlyeq [ \textbf{S}(D^\sharp f) ]^{\frac{n}{n-1}}\,$. Unfortunately, we still find ourselves in a borderline of the $\,\mathscr L^p\,$-theory of the spherical maximal operator \cite{B,St1}. Neither Hardy-Littlewood nor Fefferman-Stein would ensure us that  $\,\mathbf{\mathcal M}F\in \mathscr L^1(\Omega)\,$. But together they actually come to the rescue. To this effect we introduce a one-parameter family $\,\{\mathfrak M_s\}_{1\leqslant s \leqslant \infty}\,$ of maximal operators,
 \begin{equation}
 \begin{split}
 \mathfrak M_s F(x) = &\underset{0 < t < \textnormal{dist}( x ,\, \partial \Omega)}{\textnormal{\Large{sup}}} \Big{[} \;\frac{n}{t^n} \int_0^t r^{n-1} \Big( \,\dashint _{S(x, r )} |F(y)|\,\textnormal{d}y \,\Big) ^s\,\textnormal{d} r\;\Big ]^{\frac{1}{s}}\\ & \;
 \end{split}
 \end{equation}
 The observant reader may wish to note that this family interpolates between $\,\mathbf{M} = \mathfrak M_1 \,$ and $\,\mathbf{S} = \mathfrak M_\infty\,$.
 \begin{theorem}
 The sublinear operator $\,\mathfrak M_s \,: \mathscr L^p(\Omega) \rightarrow \mathscr L^p(\Omega)\,$ is bounded for all exponents $\,p > \frac{n}{n-1 +\frac{1}{s}}\,$; thus for $\, p = \frac{n}{n-1}\,$ when $\,s \neq  \infty\,$.
 \end{theorem}
 Having this result in mind we now complete the proof of (\ref{sharp}), by using another straightforward consequence of  (\ref{convolution}) and (\ref{isoperimetric}),
 $$
 \big |\,F \ast \Phi_t \,\big | (x) \preccurlyeq \Big (\mathfrak M_s | D^\sharp f\,| \Big)^{\frac{n}{n-1}} \,\,,\;\;\;\textnormal{where, incidentally or not,}\;\; s = \frac{n}{n-1}\,.
  $$
 Thus $\,\mathbf{\mathcal M}F \preccurlyeq  \Big (\mathfrak M_s | D^\sharp f\,| \Big)^{\frac{n}{n-1}}\,$. In conclusion,
  $$
  \| \,F\,\|_{\mathscr H^1(\Omega)} \;=\;\| \,\mathbf{\mathcal M}F\,\|_{\mathscr L^1(\Omega)} \; \preccurlyeq  \int_\Omega |D^\sharp f |^{\frac{n}{n-1}}
  $$
  \end{proof} \textit{Exploring the $\,\textnormal{BMO}-\mathscr H^1\,$ duality.}  Two bonus results can readily be deduced from Theorem \ref{Jacobian in Hardy space}.
First, since $\,\textnormal{BMO}(\mathbb R^n)\,$  is the dual space to $\,\mathscr H^1(\mathbb R^n)\,$ \cite{F}, we obtain for every $\,\varphi \in \textnormal{BMO}(\mathbb R^n)\,$
 \begin{equation}
 \int_{\mathbb R^n} \varphi(x) \,J(x,f)\,\textnormal{d}x\;\preccurlyeq \,\| \varphi \|_{_{\textnormal{BMO}}}\,\int_{\mathbb R^n} \big|  D^\sharp f(x)\big|^{\frac{n}{n-1}}\,\textnormal{d}x \;,
 \end{equation}
  provided  $\,f \in \mathscr W^{1, n-1}(\mathbb R^n ,\, \mathbb R^n\,)\,$ and  $\,  |D^\sharp f | \in \mathscr L^{\frac{n}{n-1}}(\mathbb R^n)\,$.\\

 Second, since $\,\mathscr H^1(\mathbb R^n)\,$ is the dual of $\,\textnormal{VMO}(\mathbb R^n)\,$, we infer compactness  of the Jacobian determinants in the weak star topology of $\,\mathscr H^1(\mathbb R^n)\,$,
 \begin{equation}\nonumber
 \lim_{k \rightarrow \infty}\,\int_{\mathbb R^n} \varphi(x) \,J(x,f_k)\,\textnormal{d}x \;  = \;        \int_{\mathbb R^n} \varphi(x) \,J(x,f)\,\textnormal{d}x\;,\;\;\;\;\;\varphi \in \textnormal{VMO}(\mathbb R^n),
  \end{equation}
whenever $\,f_k\rightharpoonup f\,$ weakly in $\,\mathscr W^{1, n-1}(\mathbb R^n, \mathbb R^n )\,$ and $\,|D^\sharp f_k|\,$ stay bounded in $\mathscr L^{\frac{n}{n-1}}(\mathbb R^n)\,$. Be cautious, this fails for $\,\varphi(x) = \log |x|\,$.\\

\textit{Other implications.} Assuming that $\, f \in \mathscr W^{1,n}(\mathbb R^n, \mathbb R^n)\,$ we see that  Theorem \ref{Jacobian in Hardy space} covers the popular result of \cite{CLMS} on $\,\mathscr H^1$ -regularity of the Jacobians,  because $\, |D^\sharp f|^{\frac{n}{n-1}} \,\leqslant |Df|^n \in \mathscr L^1(\mathbb R^n)\,$. It also covers a very useful result by S. M\"{u}ller \cite{M1}  on local $\,\mathscr L \log \mathscr L\,$ - integrability of a nonnegative Jacobian. In fact we have, for every pair of concentric balls $\, B \subset 2 B \subset \mathbb R^n\,$,
\begin{equation}\label{LlogL}
\int _B F\cdot\log \Big(e + \frac{F}{F_B}  \Big)\; \preccurlyeq \;\big{\|} F \big{\|}_{\mathscr H^1(2B)}\,\preccurlyeq \int _{2B} | Df |^n \;,\; \;\;\; F = J(x, f) \geqslant 0
\end{equation}
Out of curiosity, the left hand side represents a norm in the Zygmund space $\,\mathscr L \log \mathscr L (B)\,$; the triangle inequality holds. Many further inequalities are to be found in \cite{IMb}. One of the central problems in GFT is to determine minimal regularity of a Sobolev map under which the nonnegative Jacobian is locally integrable. In fact  \cite{GIOV,IS1}, we have somewhat dual estimates below the natural domain of definition of the Jacobian function,
$$
\int _B J(x, f ) \textnormal{d}x  \,\preccurlyeq \int_{2B} | D^\sharp f |^{\frac{n}{n-1}} \;\textnormal{\large{log}}^{-1} \Big( \textnormal{\large e }+  \frac{|\,D^\sharp f |}{|\,D^\sharp f |_{2B}}\Big)      \preccurlyeq  \int_{2B}\; \frac{| D f(x) |^n \;\textnormal{d}x}{\textnormal{\large{log}} \Big( \,\textnormal{\large e }+  \frac{|\,D f(x) \,|}{|\,D f |_{2B}}\,\Big)}
$$
The true value of these estimates goes beyond theoretical interest; they play a significant role in establishing the existence of energy-minimal deformations in the theory of $n$-dimensional hyperelasticity.

\section*{Sketch II, \;\;The $\,p\,$ -harmonic transform, \\ a play with the sharp maximal operator }
We consider the nonhomogeneous  $\,p$ -harmonic equation, a prototype of many nonlinear PDEs,
\begin{equation}\label{p-harmEquation}
\, \textnormal{div}\, \big(|\nabla  u|^{p-2} \nabla u \big) = \textnormal{div}\,|\mathfrak f |^{p-2}\mathfrak f\,\;,\;\;\; 1<p< \infty \,
\end{equation}
The operator that carries a given vector field $\,\mathfrak f \in \mathscr L^p(\mathbb R^n\,, \mathbb R^n)\,$ into the gradient of the (unique) solution $\,u \in \mathscr W^{1,p}(\mathbb R^n)\,$ will be called $\,p\,$ -\textit{Harmonic Transform}, denoted by
$$
\mathbf{R}_p \,: \mathscr L^p(\mathbb R^n ,\,\mathbb R^n) \rightarrow \mathscr L^p(\mathbb R^n ,\,\mathbb R^n)\;,\;\;\;\;\; \mathbf{R}_p \,\mathfrak f \,\deff \,\nabla u
$$
The linear operator $\,\mathbf R_2\, = \, - \big[\,R_{ij} \,\big]_{1\leqslant i, j \leqslant n}\,:\, \mathscr L^s(\mathbb R^n, \mathbb R^n) \rightarrow \mathscr L^s(\mathbb R^n, \mathbb R^n)\,$ is  a matrix of second order Riesz transforms, $\,R_{ij} = R_i\circ R_j\,$. This is a device for the $\,\mathscr L^2$ -projection of a vector field onto the gradient and divergence-free components, known as \textit{Hodge decomposition}:
\begin{equation}\label{Hodge decomposition}
\mathfrak f = \nabla \varphi \, + \,\frak h \;=\; \textbf{R}_2\mathfrak f \,+\,\textbf{T}\mathfrak f \;,\;\;\;\; \textbf{T }= \textbf{I} - \textbf{R}_2\,:\, \mathscr L^s(\mathbb R^n, \mathbb R^n) \rightarrow \mathscr L^s(\mathbb R^n, \mathbb R^n)\nonumber
\end{equation}
Note that $\,\textbf{T}\,$ vanishes on gradient fields. The idea in the sequel is to use $\,\varphi\,$ as a test function for a divergence type nonlinear differential expressions.\\
While the Sobolev space $\,\mathscr W^{1,p}(\mathbb R^n)\,$, together with the given vector field $\,\mathfrak f \in \mathscr L^p(\mathbb R^n\,, \mathbb R^n)\,$, is considered the natural setting for the equation (\ref{p-harmEquation}), we shall depart from it and move into the realm of exponents $\, s \geqslant p\,$. We shall show that the $\,p\,$-harmonic transform is bounded in $\,\mathscr L^s(\mathbb R^n ,\, \mathbb R^n)\,$ in the sense of the following
\begin{theorem} If $\,\mathfrak f\,$ belongs to $\, \mathscr L^p(\mathbb R^n ,\, \mathbb R^n) \, \cap \,\mathscr L^s(\mathbb R^n ,\, \mathbb R^n)\,$ then so does $\, \textbf{R}_p\,\mathfrak f \,$. Moreover, we have the uniform bound,
\begin{equation}\label{bound of R}
\, \|\,\textbf{R}_p\,\mathfrak f\,\|_s \preccurlyeq \|\,\frak f\,\|_s \,, \;\;\;\; s \geqslant p
\end{equation}

\end{theorem}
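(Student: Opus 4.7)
The plan is to bound the sharp maximal function of $|\nabla u|^p$ by the Hardy--Littlewood maximal function of $|\mathfrak f|^p$ pointwise, and then boost the natural $\mathscr L^p$-estimate up to $\mathscr L^s$ via the Fefferman--Stein inequality (\ref{2}) of Theorem \ref{maxIneq}.

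First I would dispose of the baseline case $s=p$. Strict monotonicity of $\xi \mapsto |\xi|^{p-2}\xi$ guarantees that (\ref{p-harmEquation}) admits a unique weak solution $u \in \mathscr W^{1,p}(\mathbb R^n)$; testing against $u$ and applying H\"older yields $\|\nabla u\|_p \preccurlyeq \|\mathfrak f\|_p$. This both settles (\ref{bound of R}) at $s=p$ and supplies the $\mathscr L^1$-integrability of $|\nabla u|^p$ that the Fefferman--Stein inequality will demand.

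Next, for $s>p$, I would work locally. Fix a ball $B = B(x_0,r)$ and introduce the $p$-harmonic replacement $h$ defined by $h - u \in \mathscr W^{1,p}_0(B)$ and $\textnormal{div}\,(|\nabla h|^{p-2}\nabla h) = 0$ in $B$. Testing the difference equation with $\psi = u - h$, together with the classical monotonicity inequality
\begin{equation*}
\bigl(|\xi|^{p-2}\xi - |\eta|^{p-2}\eta\bigr)\cdot(\xi - \eta) \;\gtrsim\; \bigl(|\xi|+|\eta|\bigr)^{p-2}|\xi - \eta|^2,
\end{equation*}
produces a Caccioppoli-type comparison of the form $\dashint_B |\nabla u - \nabla h|^p \preccurlyeq \dashint_B |\mathfrak f|^p$. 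Coupling this with the interior $C^{1,\alpha}$ regularity of $p$-harmonic maps (Uhlenbeck, Tolksdorf, DiBenedetto), which forces the oscillation of $\nabla h$ on a shrunk concentric ball to be an arbitrarily small fraction of $\dashint_B |\nabla h|^p$, and taking the supremum over balls $B \ni x$, one arrives at the pointwise sharp-maximal bound
\begin{equation*}
\mathbf M^\sharp\bigl(|\nabla u|^p\bigr)(x) \;\preccurlyeq\; \mathbf M\bigl(|\mathfrak f|^p\bigr)(x) \;+\; \varepsilon\, \mathbf M\bigl(|\nabla u|^p\bigr)(x)
\end{equation*}
with $\varepsilon>0$ at our disposal.

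The endgame is quick: (\ref{2}) applied with exponent $q=s/p>1$ gives $\|\mathbf M(|\nabla u|^p)\|_{s/p} \preccurlyeq \|\mathbf M^\sharp(|\nabla u|^p)\|_{s/p}$, and dominating both maximal norms via (\ref{1}) converts the displayed pointwise bound into $\|\nabla u\|_s^p \preccurlyeq \|\mathfrak f\|_s^p + \varepsilon\,\|\nabla u\|_s^p$; choosing $\varepsilon$ small and absorbing completes the proof of (\ref{bound of R}). The hard part will be the sharp-maximal step, particularly in the singular range $1<p<2$, where the monotonicity inequality degenerates whenever $|\nabla u|$ or $|\nabla h|$ is small; there one must run the comparison with the weighted quantity $(|\nabla u|+|\nabla h|)^{p-2}|\nabla u - \nabla h|^2$ and close it by a self-improving Gehring-type reverse H\"older, rather than by direct Young absorption.
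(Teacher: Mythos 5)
Your proposal is correct and is essentially the paper's own first route: comparison with the $p$-harmonic replacement (your Caccioppoli-type bound is (\ref{locest})), the $\mathscr C^{1,\alpha}$ decay (\ref{locest1}), a pointwise sharp-maximal inequality, and then absorption after applying Fefferman--Stein (\ref{2}) and Hardy--Littlewood (\ref{1}) with exponent $q=s/p>1$ (both you and the paper tacitly use the qualitative finiteness of $\|\nabla u\|_s$ for the absorption, the paper deferring this to Gehring's lemma). The only real differences are that you apply $\mathbf M^\sharp$ to the scalar $|\nabla u|^p$ rather than to $\nabla u$ itself, which costs one extra H\"older--Young step to reach your claimed pointwise bound but does go through, and that your absorption forces $\varepsilon$ to be small compared with $s/p-1$, so your constant degenerates as $s\downarrow p$; the paper's second argument, which keeps $\mathbf M$ on both sides and exploits the precise $(q-1)$-asymptotics in (\ref{1}), is included exactly to remove that degeneracy, whereas you compensate only at the endpoint via the separate $s=p$ energy estimate.
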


\begin{proof} We only sketch the proof of the uniform estimate (\ref{bound of R} ), and stick to the case $\, p \geqslant 2\,$  for simplicity. A  laborious proof of $\,\mathscr L^s\,$ -integrability of $\,\textbf{R}_p\mathfrak f\,$, based on Gehring's Lemma on reverse H\"{o}lder inequalities \cite{Ge},  can be found in \cite{I1}.
Choose and fix a ball $\, B \subset \mathbb R^n\,$. The weak form of equation (\ref{p-harmEquation}) reads as,
\begin{equation}\label{pharm}
\int_B \langle\;|\nabla u |^{p-2} \nabla u\; \big |\; \nabla\varphi\, \rangle \;=\; \int_B \langle\;|\mathfrak f|^{p-2} \mathfrak f\; \big |\; \nabla\varphi\, \rangle\;,\;\;\;\textnormal{for}\;\;\;\varphi \in \mathscr W^{1,p}_\circ(B)
\end{equation}
Let  $\,v \in u + \mathscr W^{1,p}_\circ(B)\,$ be a (unique) function that agrees with $\,u\,$ on $\,\partial B\,$ and has least $\,p\,$-harmonic energy $\, \mathscr E_p[v] = \int_B |\nabla v |^p\,$ . Thus $\,v\,$ is a $\,p\,$-harmonic function, meaning that

\begin{equation}\label{p-harm}
\int_B \langle\;|\nabla v |^{p-2} \nabla v\; \big |\; \nabla\varphi\, \rangle \; = 0 \;\;\; \textnormal{and}\,\;\; \dashint _B |\nabla v |^p \leqslant  \dashint _B |\nabla u |^p
\end{equation}
We subtract this integral from the left hand side of (\ref{pharm}) and test the resulting equation with $\, \varphi = u - v\,$. From this it is straightforward to derive basic local estimates
\begin{equation}\label{locest}
\dashint_B |\nabla u - \nabla v |^p \;\;\preccurlyeq \;\; \dashint_B |\,\mathfrak f|^p
\end{equation}
We aim to replace $\,\nabla v\,$ by a constant.  For this we recall the $\,\mathscr C^{1, \alpha}\,$ -regularity of $\,p\,$-harmonic functions, $\, 0< \alpha = \alpha(n, p) \leqslant 1\,$.  Precisely, for every $\,0 < \tau \leqslant 1\,$,
\begin{equation}\label{locest1}
\dashint_{\tau B} \big |\,\nabla v \,- \,(\nabla v )_{\tau B} \;\big |^p \;\;\preccurlyeq \;\; \tau ^{\alpha \,p}\dashint_B |\nabla v|^p \;\leqslant \,\tau ^{\alpha \,p}\dashint_B |\nabla u|^p
\end{equation}
where the implied constants in the inequalities  $\,\preccurlyeq\,$ depend only on $\,n\, $ and $\,p\,$. On the other hand (\ref{locest}) yields
\begin{equation}\label{locest2}
\dashint_{\tau B} \big |\,\nabla u \,- \,\nabla v  \;\big |^p \;\;\preccurlyeq \;\; \tau ^{-n}\dashint_B |\,\mathfrak f\,|^p
\end{equation}
whence it is readily inferred that
\begin{equation}\label{locest3}
\dashint_{\tau B} \big |\,\nabla u \,- \,(\nabla u)_{\tau B}  \;\big |^p \;\;\preccurlyeq \;\; \tau ^{-n}\dashint_B |\,\mathfrak f\,|^p\;\;+\;\;\tau ^{\alpha \,p}\dashint_B |\nabla u|^p
\end{equation}

From now on there are two ways to obtain the estimate (\ref{bound of R}), both via a pointwise inequality between maximal functions. In the first approach, we apply H\"{o}lder's inequality to the left hand side of (\ref{locest3}) and then take supremum over all balls centered at a given point,

$$
\big|\,\mathbf M^\sharp \nabla u\,\big|^p \preccurlyeq \tau ^{-n}\mathbf M |\,\mathfrak f\,|^p\;\;+\;\;\tau ^{\alpha \,p}\mathbf M|\nabla u|^p\;,\;\;\;\;\textnormal{pointwise in }\;\mathbb R^n
$$
We raise this to the power $\,\frac{s}{p} > 1\,$ and, with the aid of maximal inequalities, obtain

$$
\big{\|}\,\nabla u\,\big{\|}^p_{\mathscr L^s(\mathbb R^n)} \; \preccurlyeq \;\tau ^{-n}\big{\|}\,\mathfrak f\,\big{\|}^p_{\mathscr L^s(\mathbb R^n)}\;\;+\;\;\tau ^{\alpha \,p}\big{\|}\,\nabla u\,\big{\|}^p_{\mathscr L^s(\mathbb R^n)}\,,
$$
where the implied constant depends on $\,n, p\,$ and $\,s\,$, but not on the parameter $\,\tau\,$. The observant reader may be concerned that this constant blows up as $\, s\,$ approaches $\,p\,$. But still we can chose $\,\tau\,$ small enough so that the last term in the right hand side will be absorbed by the left hand side, establishing the desired estimate (\ref{bound of R}).\\
  In the second approach, to avoid the undue anomaly  near the natural Sobolev space $\,\mathscr W^{1, p}(\mathbb R^n)\,$, we relinquish the idea of using the sharp maximal inequality (\ref{2}). Instead, we appeal to the full force of Hardy-Littlewood maximal inequalities near $\,\mathscr L^1(\mathbb R^n)\,$. For this purpose, we rewrite  (\ref{locest3}) as

\begin{equation}\label{locest4}
\dashint_{\tau B} \big |\,\nabla u  \;\big |^p \;\;\preccurlyeq  \;\;\tau ^{-n}\dashint_B |\,\mathfrak f\,|^p\;+\;\big|\,(\nabla u)_{\tau B}\;\big|^p \;\;+\;\;\tau ^{\alpha \,p}\dashint_B |\nabla u|^p\;,
\end{equation}
Taking supremum over the balls centered at a given point we capture a pointwise inequality for maximal functions
\begin{equation}\label{locest4}
\mathbf M \,|\nabla u  \; |^p \;\;\preccurlyeq \;\,\tau ^{-n}\mathbf M\,|\,\mathfrak f\,|^p\;\;+\;\;|\,\mathbf M\,\nabla u\;|^p \;\;+ \;\;\tau ^{\alpha \,p}\mathbf M \,|\nabla u|^p\;,
\end{equation}
We now eliminate the operator $\,\mathbf M\,$ by computing the $\,\mathscr L^{q}(\mathbb R^n)\,$ -norm, $\,q = \frac{s}{p} \approx 1\,$,  of both sides.  Maximal inequalities  (\ref{1}) yield,
$$
\big {\|}\,\nabla u  \;\big {\|}^p_{\mathscr L^s(\mathbb R^n) }\preccurlyeq \,\tau ^{-n}\big {\|}\,\mathfrak f  \;\big {\|}^p_{\mathscr L^s(\mathbb R^n) }\;+\;(s-p)\,\big {\|}\,\nabla u  \;\big {\|}^p_{\mathscr L^s(\mathbb R^n) } \;+ \;\tau ^{\alpha \,p}\big {\|}\,\nabla u  \;\big {\|}^p_{\mathscr L^s(\mathbb R^n) }
$$
This time we are not troubled with the exponent $\,s\,$ approaching $\,p $ ; the implied constant remains bounded. Choose $\, s = s(n, p) > p\,$ close enough to $\,p\,$ and $\,\tau\,$ sufficiently small so that the last two terms will be absorbed by the left hand side. We obtain uniform bounds (\ref{bound of R})  for $\,\mathbf R_p\,$ near its natural domain of definition.
\end{proof}

 \begin{remark}
 The unplanned bonus from the asymptotically precise Hardy-Littlewood maximal inequalities  (\ref{1}) gives the above proof its beauty, doesn't it?
\end{remark}

\section*{Sketch III,\;\;The splendor of commutators}
 In any preliminary analysis of a differential equation, linear or nonlinear, one often encounters undesirable higher order terms which eventually cancel out. The instruments for rigorous performance of such an analysis are the commutators of a singular integral $\,\mathbf T : \mathscr L^s(\mathbb R^n) \rightarrow  \mathscr L^s(\mathbb R^n)\,,\;1 < s < \infty \,,\;$ with suitable nonlinear algebraic operations on the gradient of the solution. Let us look briefly at three commutators,  together with their underlying estimates.
 \begin{itemize}
 \item The linear commutator of Coifman-Rochberg-Weiss \cite{CRW}
 $$
 \big{\|}\, \mathbf T (\lambda f ) \;-\; \lambda (\mathbf T f ) \,\big{\|} _ {\mathscr L^s(\mathbb R^n)}\;   \preccurlyeq \;\big{\|}\, \lambda\,\big{\|} _{BMO(\mathbb R^n)} \, \big{\|} \,f \,\big{\|} _{\mathscr L^s(\mathbb R^n)}
 $$
 \item The Rochberg-Weiss commutator \cite{RW}
  $$
 \big{\|}\, \mathbf T ( f \log |f| ) \;-\; (\mathbf T f )\log |\mathbf T f| \,\big{\|} _ {\mathscr L^s(\mathbb R^n)}\;   \preccurlyeq \; \, \big{\|} \,f \,\big{\|} _{\mathscr L^s(\mathbb R^n)}
 $$
 \item The commutator of $\,\mathbf T\,$ and a power type operation \cite{IS2}
 $$
 \big{\|}\, \mathbf T ( |f|^{\pm \,\varepsilon} f ) \;-\; |\mathbf T f|^{\pm \,\varepsilon} (\mathbf T f ) \,\big{\|} _ {\mathscr L^s(\mathbb R^n)}\;   \preccurlyeq \; \, |\varepsilon|\cdot \big{\|} \,|f|^{1\pm \,\varepsilon} \,\big{\|} _{\mathscr L^s(\mathbb R^n)}\;,\;\;\; 0\leqslant \varepsilon < \,1- \frac{1}{s}
 $$
 \end{itemize}
The proof of this latter estimate captures the ideas of the complex method of interpolation originated in the celebrated work by G. O. Thorin \cite{T}. Actually, it yields the estimate of the Rochberg-Weiss commutator through the L'H\^{o}pital's rule, which in turn gives us M\"{u}ller's  $\,\mathscr L \log \mathscr L $ -integrability of nonnegative Jacobians in a stylish way \cite{IMb}.

Although the linear commutator of Coifman-Rochberg-Weiss has been known for a long time, and numerous deep studies have been devoted to it, its usefulness in solving PDEs still remains magical. For example, good estimates of the $\,p$ -norms of the tensor products of the Riesz transforms combined with the Fredholm index theory (via compactness of the Coifman-Rochberg-Weiss commutators) are elegant tools in elliptic PDEs with VMO coefficients \cite{IS3}; there is no need to go again and again through the foundational details of singular integrals.\\

I have saved the best for last:
\subsection*{\textit{Very weak solutions of nonlinear PDEs}} These are the solutions weaker than those in the natural domain of definition. The chief difficulty is to launch some estimates in order to take the very weak solution off the ground. Let us return to the weak formulation of  the $\,p$ -harmonic equation (\ref{pharm}) in which $\,\mathfrak f \in \mathscr L^{p-\varepsilon}(\mathbb R^n ,\mathbb R^n)\,$, so we must look, naturally, for the solution   $\, u \in \mathscr W^{1,\, p-\varepsilon}(\mathbb R^n,\,\mathbb R^n)\,$. The legitimate test function $\,\varphi\,$ must lay in $\, \mathscr W^{1,\,\frac{p-\varepsilon}{1-\varepsilon}} (\mathbb R^n)\,$. We steal it from the Hodge decomposition
$$
|\nabla u |^{-\varepsilon}  \nabla u \; = \nabla \varphi \,+\,\mathfrak h\,,\;\;\; \textnormal{where}\;\; \mathfrak h  =  \mathbf T(|\nabla u |^{-\varepsilon}  \nabla u )\;,
$$
to obtain
\begin{equation}\label{xx}
\int_{\mathbb R^n} |\nabla u |^{p -\varepsilon}\preccurlyeq \int_{\mathbb R^n} |\mathfrak f |^{p -\varepsilon}\;+\; \int_{\mathbb R^n} |\mathfrak h |^{\frac{p -\varepsilon}{1 -\varepsilon}}
\end{equation}

Since the operator $\,\mathbf T\,$ vanishes on gradient fields we can write $\,\mathfrak h\,$ as a commutator of $\,\mathbf T\,$ and the power function, $\,\mathfrak h  =  \mathbf T(|\nabla u |^{-\varepsilon}  \nabla u )\;-\;|\mathbf T\nabla u |^{-\varepsilon} ( \mathbf T \nabla u ) \,$. Our estimate for the power type commutator shows that
$$
 \int_{\mathbb R^n} |\mathfrak h |^{\frac{p -\varepsilon}{1 -\varepsilon}} \preccurlyeq\; \varepsilon\cdot \int_{\mathbb R^n} |\nabla u |^{p -\varepsilon}
$$
Consequently, this term can be sucked up  by the left hand side, which results in the desired estimate of the $\,p$ -harmonic transform slightly below its natural domain of definition.
\begin{equation}\label{xxx}
\big{\|}  \textbf{R}_p \,\mathfrak f \big{\|}_{\mathscr L^{p -\varepsilon}(\mathbb R^n) }  \preccurlyeq   \big{\|}  \mathfrak f \big{\|} _{\mathscr L^{p -\varepsilon}(\mathbb R^n)}
\end{equation}
A study of very weak solutions of nonlinear PDEs is largely motivated by removability of singularities \cite{Ia}.

\section{Moral of the Story}
If one dark rainy night you find yourself in the midst of Whitney cubes, covering lemmas, Calder\'{o}n-Zygmund decomposition, etc., then you should remind yourself that instead you might cleverly apply singular integral stuff and thereby see the light.

\begin{quote}
\textit{``Every block of stone has a statue inside it and it is the task of the sculptor to discover it.''}
\end{quote}
\begin{flushright} - \textit{Michelangelo di Lodovico Buonarroti Simoni}
\end{flushright}

\begin{quote}
\textit{``No profit grows where is no pleasure taken; in brief, sir, study what you most affect.''}
\end{quote}
\begin{flushright} - \textit{William Shakespeare \footnote{\;An English teacher assigned a student to read some Shakespeare, and a week later he asked: ``How did you like it?'' The student answered: ``Well, nothing special; just a collection of quotations.''}}
\end{flushright}
However,
\begin{quote}
\textit{``In order for something to remain beautiful, it must stay long enough to be noticed and enjoyed, never so long as to outstay its welcome.''}
 \end{quote}
 \begin{flushright}  -\textit{Fisher, Philip} \\``The Rainbow and Cartesian Wonder.''
 \end{flushright}

Finally, as Thomas Edison said, \begin{center} \textit{``No sooner does a fellow succeed in making a good thing, than some other fellows pop up and tell you they did it years ago.''}.   \footnote{\;Once a  scholar loudly informed the speaker of a plenary lecture ``Sir, your result follows from my theorem, I proved it years ago.'' The speaker answered: ``Yes it does follow, obviously. However, I am not sure if your theorem follows from my result.''}\end{center}

\bibliographystyle{amsplain}

\end{document}